\newcommand{\R}{\mathbb{R}}
\newcommand{\D}{\mathbb{D}}
\newcommand{\C}{\mathbb{C}}
\renewcommand{\H}{\mathcal{H}}
\theoremstyle{plain}
\newtheorem{thm}{Theorem}[section]
\newtheorem*{thm*}{Theorem}
\newtheorem{lem}[thm]{Lemma}
\newtheorem{prop}[thm]{Proposition}
\theoremstyle{definition}
\newtheorem{rmk}[thm]{Remark}
\DeclareMathOperator{\interior}{Int}
\renewcommand{\Re}{\operatorname{Re}}
\renewcommand{\Im}{\operatorname{Im}}
\numberwithin{equation}{section}
\newcommand{\abs}[1]{\lvert#1\rvert}
\def\a{\alpha}              \def\th{\theta}       
      \def\z{\zeta}
\def\zth{\z_{\th}} \def\vth{v_{\th}} 
      \def \ov{\overline}
\def\beq{\begin{equation}}  \def\eeq{\end{equation}}
\def\beqq{\begin{equation*}}  \def\eeqq{\end{equation*}}
\def\bprof{\begin{proof}}    \def\eprof{\end{proof}}
\def\bad{\begin{aligned}}    \def\ead{\end{aligned}}
\def\l{\left}      \def\r{\right}
\def\bthm{\begin{thm}} \def\ethm{\end{thm}}
\def\blem{\begin{lem}} \def\elem{\end{lem}}
\def\p{\partial} \def\e{e^{i \th}}
\def\be{\begin{enumerate}} \def\ee{\end{enumerate}}
  \def\lam{\lambda}
\begin{document}

\title[Variability regions for the second derivative]{Variability regions for the second derivative of bounded analytic functions}

\author{Gangqiang Chen}
\address{Graduate School of Information Sciences,
Tohoku University,
Aoba-ku, Sendai 980-8579, Japan}
\email{cgqmath@ims.is.tohoku.ac.jp}
\author{Hiroshi Yanagihara}
\address{Department of Applied Science \\
        Faculty of Engineering \\
        Yamaguchi University \\
        Tokiwadai, Ube 755-8611 \\
        Japan}
\email{hiroshi@yamaguchi-u.ac.jp}
\subjclass[2010]{Primary: 30C80; Secdonary: 30F45}
\keywords{Bounded analytic functions, Schwarz's lemma, Dieudonn\'e's lemma, variability region}

\begin{abstract}
Let $z_0$ and $w_0$ be given points in the open unit disk $\mathbb{D}$
with $|w_0| < |z_0|$.
Let $\H_0$ be the class of all analytic self-maps $f$
of $\mathbb{D}$ normalized by $f(0)=0$, and
$\H_0 (z_0,w_0) =  \{ f \in \H_0 : f(z_0) =w_0\}$.
In this paper, we explicitly determine the variability region
of $f''(z_0)$ when $f$ ranges over $\H_0 (z_0,w_0)$.
We also show a geometric view of our main result by Mathematica.
\end{abstract}

\maketitle

\section{Introduction}
First we fix some notation. For $c \in \mathbb{C}$ and $r > 0$,
let $\mathbb{D}(c,r ) = \{ z \in \mathbb{C} : |z-c| < r \}$
and
$\overline{\mathbb{D}}(c,r ) = \{ z \in \mathbb{C} : |z-c| \leq r \}$.
In particular we denote the open and closed unit disks
$\mathbb{D}(0,1)$ and $\overline{\mathbb{D}}(0,1)$ by
$\mathbb{D}$ and $\overline{\mathbb{D}}$, respectively. Let $z_0$ and $w_0$ be given points in the open unit disk $\mathbb{D}$
with $|w_0| < |z_0|$. We denote by $\H_0$ the set of all analytic self-maps $f$ of $\mathbb{D}$ normalized by $f(0)=0$ and set $\H_0(z_0,w_0)=\{f\in\H_0: f(z_0)=w_0\}$.
 Schwarz's Lemma states that $\{f(z_0):f \in \mathcal{H}_0\}=\ov{\D}(0,\ |z_0|)$ for any  $z_0\in \mathbb{D}$, and $f(z_0)\in \partial \ov{\D}(0,\ |z_0|)$ if and only if
 $ f(z)=e^{i \theta}z$ for some $\theta \in \mathbb{R}$.

In 1934, Rogosinski \cite{rogosinski1934} explicitly described the region of values of $f(z_0)$
when $f$ ranges over $\H_0$ satisfying
$f'(0) = \mu$ for some prescribed value  $\mu \in \overline{\mathbb{D}}$ (see also \cite{beardon2004multi}, \cite{duren1983univalent}, \cite{mercer1997sharpened}).
This refinement of Schwarz's lemma asserts that for
$z_0 \in \mathbb{D} \backslash \{ 0 \}$,
\[
  \{ f(z_0) : f \in \H_0 \text{ with } f'(0) = \mu \}
  = \overline{\mathbb{D}}\left(\frac{z_0 \mu (1-|z_0|^2)}{1-|z_0|^2|\mu|^2},\frac{(1-|\mu|^2)|z_0|^2}{1-|z_0|^2|\mu|^2}\right).
\]
Notice that the variability region
is strictly contained in $\overline{\mathbb{D}}(0,|z_0|)$.

In 1931, Dieudonn\'{e} \cite{dieudonne1931} determined the variability region
of $f'(z_0)$ at a fixed point $z_0 \in \mathbb{D}\backslash \{0\}$ when $f$ ranges over
$\H_0(z_0,w_0)$. We write
\begin{equation}\label{eq:T-a-z}
T_{a}(z)=\frac{z+a}{1+\overline{a}z},\quad z, a\in\D,
\end{equation}
and define
\[
  \Delta (z_0,w_0)
  =
  \overline{\mathbb{D}}
  \left( \frac{w_0}{z_0} , \frac{|z_0|^2 -|w_0|^2}{|z_0|(1-|z_0|^2)} \right) .
\]
Then he obtained
\begin{equation}\label{1stDieudonne}
   \{ f'(z_0) : f \in \H_0(z_0,w_0) \}
   =
   \Delta (z_0,w_0).
\end{equation}
For $f \in \mathcal{H}_0(z_0,w_0)$ consider the function $\tilde{f}$
defined implicitly by
\begin{equation}\label{eq:tilde-f}
    \frac{z-z_0}{1-\overline{z_0} z}\tilde{f}(z)
   = \frac{\frac{f(z)}{z} - \frac{w_0}{z_0}}
   {1- \overline{\left( \frac{w_0}{z_0}\right)} \frac{f(z)}{z}} .
\end{equation}
Notice that $|\tilde{f}(z)| \leq 1$, $z \in \mathbb{D}$.
Differentiating both sides shows
\begin{equation}
\label{eq:diff-of-g-and-f}
    \frac{1-|z_0|^2}{(1-\overline{z_0} z)^2} \tilde{f}(z)
     + \frac{z-z_0}{1-\overline{z_0} z} \tilde{f}'(z)
   = \frac{1 - \left| \frac{w_0}{z_0}\right|^2 }
   {\left( 1- \overline{\left( \frac{w_0}{z_0}\right)}
     \frac{f(z)}{z}\right)^2}
   \frac{zf'(z)-f(z)}{z^2}.
\end{equation}
By substituting $z =z_0$, we have
\begin{equation}\label{eq:tilde-f-f-z0}
\frac{\tilde{f}(z_0)}{1-|z_0|^2}=\frac{z_0f'(z_0)-w_0}{z_0^2 \l(1-\abs{\frac{w_0}{z_0}}^2\r)},
\end{equation}
and hence
\begin{equation}
\label{eq:diff-of-g-and-f-at-z_0}
   f'(z_0)
   =
   \frac{w_0}{z_0} + \frac{|z_0|^2-|w|^2}{\ov{z}_0(1-|z_0|^2)}
   \tilde{f}(z_0) .
\end{equation}
Combining this and the estimate $|\tilde{f}(z_0)|\leq 1$ we easily obtain
$\{ f'(z_0) : f \in \mathcal{H}_0(z_0,w_0) \} \subset \Delta (z_0,w_0)$.
The reverse inclusion relation follows from considering the function
$f_\lambda \in \mathcal{H}_0(z_0,w_0)$ defined by
$$
f_{\lambda}(z)=z T_{\frac{w_0}{z_0}}\left(\lambda T_{-z_0}(z)\right).
$$
Notice that $f_{\lam}$ can be obtained by putting $f_{\lam}=\lam$ in \eqref{eq:tilde-f}.
The result is nowadays called Dieudonn\'{e}'s lemma.

In 2013, Rivard \cite{rivard2013application} proved a Dieudonn\'e's lemma of the second order (see also \cite{cho2012multi}).
The original result
can be restated as follows.\\
\noindent{\bf Theorem A}(Rivard \cite{rivard2013application}).
\textit{
Let $\lambda \in \overline{\mathbb{D}}$. Then
\begin{align}
\label{Rivard}
 & \left\{ f''(z_0) : f \in \mathcal{H}_0 (z_0,w_0) \text{ with }
  f'(z_0)
  = \frac{w_0}{z_0} + \frac{|z_0|^2-|w|^2}{\ov{z}_0(1-|z_0|^2)}
  \lambda \right\}
\\
  = \, &
 A(z_0,w_0)
 \overline{\mathbb{D}}(c(\lambda), \rho(\lambda)) ,
\nonumber
\end{align}
where
\[
   A(z_0,w_0)=\frac{2 \left( |z_0|^2-|w_0|^2 \right)}{|z_0|^2(1-|z_0|^2)^2},\quad
   c(\lambda )
  =\lambda \left( 1- \frac{z_0 \ov{w}_0}{\ov{z}_0}\lambda \right) ,\quad
  \rho (\lambda ) = |z_0| (1- |\lambda |^2 ) .
\]
}

For completeness, in \S 2, we shall give an elementary proof of
Theorem A and determine all the extremal functions.

Based on this result, the first author \cite{chen_2019} gave the sharp estimate for $|f''(z_0)|$. In this paper, we would like to further study the second derivative $f''(z_0)$ by explicitly describing the variability region
\begin{equation}\label{variability_region}
V(z_0,w_0)=\{f''(z_0):f\in\H_0(z_0,w_0)\}.
\end{equation}

From  Theorem A it easily follows that
\begin{equation}
  V(z_0,w_0 )
  =
  A(z_0,w_0)
  \bigcup_{\lambda \in \overline{\mathbb{D}}}
  \overline{\mathbb{D}}(c(\lambda), \rho(\lambda)) .
\end{equation}
We note some basic properties of the set $V(z_0,w_0)$.
The class $\mathcal{H}_0(z_0,w_0)$ is a compact convex subset
of the linear space $\mathcal{A}$ of all analytic functions $f$
in $\mathbb{D}$ endowed with the topology of
locally uniformly convergence on $\mathbb{D}$.
The functional $\ell : \mathcal{A} \ni f \mapsto f''(z_0) \in \mathbb{C}$
is continuous and linear.
Therefore the image
$V(z_0,w_0 ) = \ell ( \mathcal{H}_0 (z_0,w_0))$ is also
a compact convex subset of $\mathbb{C}$.
Furthermore the origin is an interior point of $V(z_0,w_0 )$,
because
\[
    A(z_0,w_0)
  \overline{\mathbb{D}}(0, |z_0|)
  =
    A(z_0,w_0)
  \overline{\mathbb{D}}(c(0), \rho(0))
  \subset
  V(z_0,w_0 ).
\]
Recall that a compact convex subset in $\C$
with nonempty interior is a Jordan closed domain (for a proof see \cite[\S 11.2]{berger1987geometry}).
Therefore $\partial V (z_0,w_0)$ is a Jordan curve and
 $V (z_0,w_0)$ is the convex closed domain enclosed
by $\partial V (z_0,w_0)$. \\

Moreover the relations
\begin{equation}
  V(e^{i\theta_1}z_0,e^{i \theta_2}w_0)
  =
  e^{i ( \theta_2 -2 \theta_1 )} V(z_0,w_0 ) ,
  \quad
  \overline{V(z_0,w_0 )} = V( \overline{z_0}, \overline{w_0} )
\end{equation}
hold   for $\theta_1 , \theta_2 \in \mathbb{R}$.
These are  consequences of the facts that
$e^{i \theta_2}f(e^{- i \theta_1}z)
\in \mathcal{H}_0(e^{i \theta_1}z_0,e^{i \theta_2}w_0)$
and that $\overline{f(\overline{z})} \in V(\overline{z_0}, \overline{w_0})$
for $f \in \mathcal{H}_0(z_0,w_0)$, respectively.
In view of these properties it is sufficient to determine
$\partial V(r,s)$ for $0\leq s < r <1$.
In this case we notice that
$\overline{V(r,s)} = V(r,s)$.
Define
\begin{equation}\label{eq:c-s-zeta}
c_s(\zeta)=\zeta(1-s \zeta),\quad \rho_r(\z)=r(1-|\z|^2).
\end{equation}
\begin{thm}
\label{thm:expression-of-boundary-curve}
Let $0 \leq s < r< 1$.
\begin{itemize}
 \item[{\rm (i)}]
If $r+s \leq \frac{1}{2} $,
then $\partial V(r,s)$ coincides with the Jordan curve given by
\begin{equation}\label{eq:i-p-V}
   \partial \mathbb{D} \ni \zeta
    \mapsto A(r,s) c_s(\zeta ).
\end{equation}
 \item[{\rm (ii)}]
If $ r-s \ge \frac{1}{2}$,
then $\partial V(r,s)$ coincides with  the circle given by
\begin{equation}\label{eq:ii-p-V}
   \partial \mathbb{D} \ni \zeta
    \mapsto \frac{1}{2r^2(1-r^2)^2}
    \left[ \left\{ 1+4(r^2-s^2) \right\}r \zeta -s  \right].
\end{equation}
 \item[{\rm (iii)}]
If $r+s > \frac{1}{2}$ and $r-s < \frac{1}{2} $,
then
$\partial V(r,s)$ consists of the circular arc given by
\begin{equation}\label{eq:iii-1-p-V}
   (-\th_0,\th_0)  \ni \th
    \mapsto \frac{1}{2r^2(1-r^2)^2}
    \left[ \left\{ 1+4(r^2-s^2) \right\}r e^{i\th}-s  \right]
\end{equation}
and the simple arc given by
\begin{equation}\label{eq:iii-2-p-V}
    J \ni \zeta
    \mapsto A(r,s) c_s(\zeta ),
\end{equation}
where
\begin{equation}\label{eq:theta-0}
  \theta_0 = \cos^{-1} \frac{r^2+s^2-4(r^2-s^2)^2}{2sr} \in (0, \pi )
\end{equation}
and $J$ is the closed subarc of $\partial \mathbb{D}$ which has
end points
$\zeta_0 = \dfrac{re^{i \theta_0}-s}{2(r^2-s^2)}$
and
$\ov{\zeta}_0 = \dfrac{re^{-i \theta_0}-s}{2(r^2-s^2)}$
and contains $-1$.
\end{itemize}
\end{thm}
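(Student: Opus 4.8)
The plan is to understand $V(r,s)$ as the union $A(r,s)\bigcup_{\lambda\in\overline{\mathbb{D}}}\overline{\mathbb{D}}(c(\lambda),\rho(\lambda))$ established from Theorem~A, and to locate its boundary by a support-line (envelope) analysis. Since $V(r,s)$ is a compact convex body symmetric about the real axis, for each direction $e^{i\varphi}$ the supporting line is determined by maximizing $\Re(e^{-i\varphi}w)$ over all $w$ in the union. Fixing $\lambda=te^{i\psi}$ with $t\in[0,1]$, the outermost point of the disk $\overline{\mathbb{D}}(c(\lambda),\rho(\lambda))$ in direction $e^{i\varphi}$ is $c(\lambda)+\rho(\lambda)e^{i\varphi}$, so the boundary of $V$ is the envelope of the two-parameter family of circles, carved out by those $(\lambda)$ for which the point $A(r,s)\big(c(\lambda)+\rho(\lambda)e^{i\varphi}\big)$ is genuinely extremal. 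The key computation is to write $c_s(\zeta)$ and see that, after the normalization absorbing $e^{i\varphi}$ into a unit parameter $\zeta=e^{i\psi}$ (using $\rho(\lambda)=r(1-t^2)$ and $c(\lambda)=te^{i\psi}(1-s t e^{i\psi})$ once $z_0\overline{w_0}/\overline{z_0}$ is reduced to $s$ in the normalized case), the contribution of the boundary points of the disks with $t<1$ versus $t=1$ gives precisely the two candidate arcs in the statement: the ``big circle'' \eqref{eq:ii-p-V}/\eqref{eq:iii-1-p-V} comes from $t=0$ (the single disk $\overline{\mathbb{D}}(0,r)$ blown up), while the curve $A(r,s)c_s(\zeta)$ with $\zeta\in\partial\mathbb{D}$ comes from $t=1$ (the degenerate disks of radius zero, i.e.\ the curve traced by centers $c_s$).

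With this dichotomy in hand, I would carry out the following steps in order. First, reduce to $z_0=r$, $w_0=s$ real, $0\le s<r<1$, using the stated rotational covariance, and record that $V(r,s)$ is convex and symmetric in the real axis. Second, for a fixed outward direction, compare the two extremal candidates: the point on the big circle $\frac{1}{2r^2(1-r^2)^2}[\{1+4(r^2-s^2)\}re^{i\theta}-s]$ (which I would derive as $A(r,s)(c_s(0)+\rho_r(0)e^{i\theta})$ suitably re-expressed, noting $A(r,s)\cdot r=\frac{2(r^2-s^2)}{r(1-r^2)^2}$ and completing the radius/center algebra) and the point $A(r,s)c_s(\zeta)$, $\zeta\in\partial\mathbb{D}$. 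Third, determine for which directions each candidate wins: the curve $\zeta\mapsto A(r,s)c_s(\zeta)$ is (a scaled) cardioid-type curve $\zeta-s\zeta^2$, which is convex as a boundary precisely when $s$ is small relative to $r$, and it lies outside the big circle exactly in the regime governed by the thresholds $r+s\le\frac12$ and $r-s\ge\frac12$. Fourth, in the intermediate regime $r+s>\frac12$, $r-s<\frac12$, show the boundary switches from one curve to the other at the crossing points, and verify that these crossing points are exactly $\zeta_0=\frac{re^{i\theta_0}-s}{2(r^2-s^2)}$ with $\theta_0$ as in \eqref{eq:theta-0}: substitute $\zeta=\zeta_0$ into both parametrizations and check equality, and check that $|\zeta_0|=1$ is equivalent to the defining equation for $\theta_0$.

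The main obstacle I anticipate is the convexity bookkeeping in the intermediate case~(iii): one must show that the concatenation of the circular arc \eqref{eq:iii-1-p-V} over $(-\theta_0,\theta_0)$ with the arc \eqref{eq:iii-2-p-V} over $J$ is actually the boundary of the convex hull — i.e.\ that neither piece pokes inside the other, that the join is $C^1$ (tangency of the circle and the curve $A(r,s)c_s$ at $\zeta_0$), and that the curve $A(r,s)c_s(\zeta)$ restricted to $J$ is itself convex and lies outside the big circle while its complement $\partial\mathbb{D}\setminus J$ lies inside. Establishing the tangency is the crux: I would compute the tangent direction of $\theta\mapsto c_s(0)+\rho_r(0)e^{i\theta}=re^{i\theta}$ scaled, versus that of $\zeta\mapsto c_s(\zeta)=\zeta-s\zeta^2$ at $\zeta=\zeta_0$, and show both equal the common support-line direction; this is where the specific value of $\theta_0$ is forced. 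A secondary technical point is handling the degenerate boundary behavior as $\lambda\to\partial\mathbb{D}$ (radius $\rho\to0$), ensuring the centers' curve $c_s$ is genuinely attained on $\partial V$ and not merely a limit; this follows from compactness of $\mathcal{H}_0(z_0,w_0)$ together with Theorem~A's identification of extremal functions, which I would invoke from \S2. The remaining steps — the two extreme cases (i) and (ii) — then reduce to the single inequality comparing the big circle and the cardioid curve, which is elementary once the right normalization is fixed.
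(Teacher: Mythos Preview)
Your support-line framework is the right one and matches the paper's strategy, but your identification of where the circular piece \eqref{eq:ii-p-V}/\eqref{eq:iii-1-p-V} comes from is wrong, and this breaks the argument. You claim the big circle arises from the single disk $\overline{\mathbb{D}}(c_s(0),\rho_r(0))=\overline{\mathbb{D}}(0,r)$ (i.e.\ $t=0$), so that the boundary point in direction $e^{i\theta}$ would be $A(r,s)\,r e^{i\theta}$. But that is a circle centred at the origin, whereas the circle in \eqref{eq:ii-p-V} has centre $-s/(2r^2(1-r^2)^2)\neq 0$ and radius $\dfrac{1+4(r^2-s^2)}{2r(1-r^2)^2}$, which strictly exceeds $A(r,s)\,r=\dfrac{2(r^2-s^2)}{r(1-r^2)^2}$. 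No ``completion of the radius/center algebra'' can fix this: the $\lambda=0$ disk is strictly inside $V(r,s)$ whenever $s>0$.

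What actually happens is that for each direction $\theta$ you must maximise
\[
k_\theta(\zeta)=\Re\{c_s(\zeta)e^{-i\theta}\}+\rho_r(\zeta)
\]
over $\zeta\in\overline{\mathbb{D}}$, and the interior stationarity equation $\partial k_\theta/\partial\zeta=0$ gives the $\theta$-dependent point $\zeta_\theta=\dfrac{re^{i\theta}-s}{2(r^2-s^2)}$, not $\zeta=0$. When $|\zeta_\theta|<1$ (equivalently $|re^{i\theta}-s|<2(r^2-s^2)$) this interior critical point is the maximiser, and the resulting boundary point $c_s(\zeta_\theta)+\rho_r(\zeta_\theta)e^{i\theta}$ simplifies, after an honest algebraic computation, to $\dfrac{\{1+4(r^2-s^2)\}re^{i\theta}-s}{4(r^2-s^2)}$; that is the source of the circular arc. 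When $|\zeta_\theta|\ge 1$ the maximum is pushed to $\partial\mathbb{D}$ and you get the $c_s$-curve. So the correct dichotomy is ``interior critical point in $\mathbb{D}$'' versus ``maximiser on $\partial\mathbb{D}$'', governed by the threshold $|re^{i\theta}-s|=2(r^2-s^2)$; the three cases (i)--(iii) then drop out from the elementary bounds $r-s\le|re^{i\theta}-s|\le r+s$, and the switching angle $\theta_0$ in (iii) is exactly the $\theta$ at which $|\zeta_\theta|=1$. Your ``$t=0$ versus $t=1$'' picture misses the moving interior optimum and would give the wrong region.
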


We show these three cases of $\p V(r,s)$ in Figure \ref{ab1}, \ref{ab2} and \ref{ab3}.

\begin{figure}[H]
\centering
\subfigure[r=3/4,s=1/4]{
\includegraphics[width=5cm]{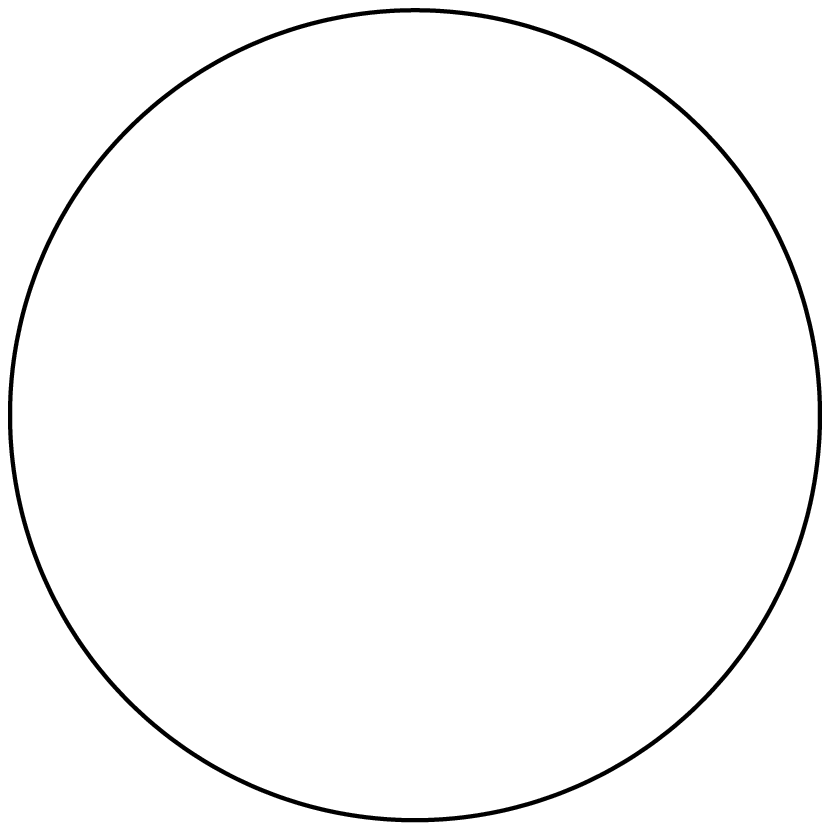}
\label{ab1}
}
\quad
\subfigure[r=1/4, s=4/17]{
\includegraphics[width=5cm]{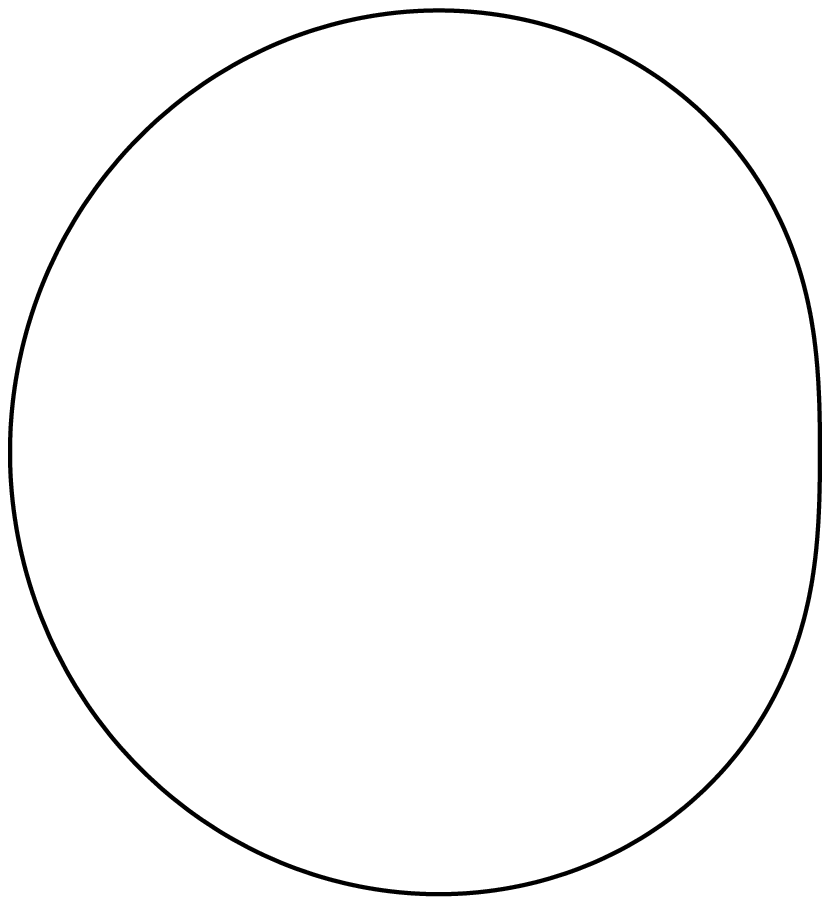}
\label{ab2}
}
\caption{If $r=3/4, s=1/4$, $\p V(r,s)$ is a circle; if $r=1/4, s=4/17$, $\p V(r,s)$ is a convex Jordan curve.}
\end{figure}
\begin{figure}[H]
    \centering 
    \includegraphics[width=7.5cm]{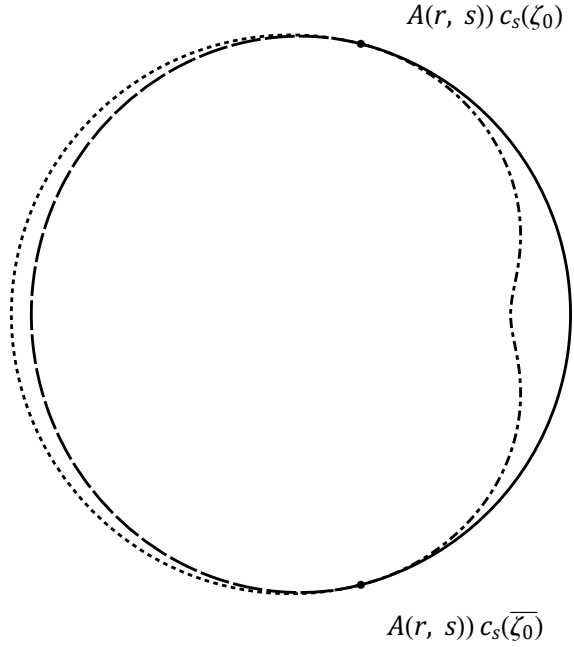}
    \caption{If $r=2/3, s=1/3$, $\p V(r,s)$ consists of a circular arc (solid) and a simple arc (dashed).}
    \label{ab3}
\end{figure}

In fact,
Theorem \ref{thm:expression-of-boundary-curve} is a direct consequence of the following theorem which gives the unified parametric representation of $\p V(r,s)$ and the all extremal functions.
\begin{thm}
\label{thm:boundary-curve}
Let $0 \leq s < r< 1$.
For $\theta \in \mathbb{R}$ let
$r_\theta$ be the unique solution to the
equation
\begin{equation}
\label{eq:definig_equation_of_r_theta}
   |xe^{i\theta} - s| = 2(x^2-s^2) , \quad x > s ,
\end{equation}
if $|re^{i\theta} - s| \geq  2(r^2-s^2) $; otherwise let $r_\theta = r$.
Set
\begin{equation}
\label{eq:def_of_zeta_theta}
  \zeta_\theta = \frac{r_\theta \e -s }{2(r_\theta^2-s^2)}
   \in \overline{\mathbb{D}} .
\end{equation}
 Then a parametric representation
$(- \pi , \pi ] \ni \theta \mapsto \gamma ( \theta )$
of the Jordan curve $\partial V(r,s)$
is given by
 $$\gamma( \theta ) =
     A(r,s)
   \left( c_s( \zeta_\theta )  + \rho_r(\zeta_\theta) e^{i \theta} \right)
  \in \partial V(r,s).$$
Furthermore,
the equality
\[
    f''(r) =  A(r,s)
   \left( c_s( \zeta_\theta )  + \rho_r(\zeta_\theta) e^{i \theta} \right)\in \p V(r,s),
\]
holds for some $\theta \in \mathbb{R}$ with $\zeta_\theta \in \mathbb{D}$
if and only if
\begin{equation}
  f(z)
 =  z T_{\frac{s}{r}}
 \left( T_{-r}(z)  T_{\zeta_\theta } (e^{i \theta} T_{-\zeta_\theta }(z))
 \right) , \quad z \in \mathbb{D} .
\end{equation}
 Here $T_a$ is defined by \eqref{eq:T-a-z}.
Similarly the equality
\[
    f''(r) =  A(r,s)
    c_s( \zeta_\theta )\in \p V(r,s),
\]
holds for some $\theta \in \mathbb{R}$ with $\zeta_\theta \in \partial \mathbb{D}$
if and only if
\begin{equation}
  f(z)
 =  z T_{\frac{s}{r}}
 \left( \zeta_\theta T_{-r }(z)
 \right) , \quad z \in \mathbb{D} .
\end{equation}
\end{thm}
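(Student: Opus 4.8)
The plan is to pass, via the symmetries $V(e^{i\theta_1}z_0,e^{i\theta_2}w_0)=e^{i(\theta_2-2\theta_1)}V(z_0,w_0)$ and $\overline{V(z_0,w_0)}=V(\overline{z_0},\overline{w_0})$ recorded in the introduction, to the case $z_0=r$, $w_0=s$ with $0\le s<r<1$, and then to describe the convex body $V(r,s)=A(r,s)\,\Omega$, where $\Omega=\bigcup_{\zeta\in\overline{\mathbb{D}}}\overline{\mathbb{D}}\bigl(c_s(\zeta),\rho_r(\zeta)\bigr)$. Two facts will be used throughout. First, differentiating \eqref{eq:diff-of-g-and-f} once more and putting $z=r$ gives the identity
\[
  f''(r)=A(r,s)\bigl(c_s(\tilde f(r))+r(1-r^{2})\,\tilde f'(r)\bigr),
\]
so that, with $\lambda:=\tilde f(r)$ held fixed, the Schwarz--Pick bound $|\tilde f'(r)|\le(1-|\lambda|^{2})/(1-r^{2})$ reproves Theorem~A and exhibits $\Omega$ as the stated union of disks. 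Second, as noted in the introduction $\Omega$ is compact and convex with $0$ an interior point, so $\partial\Omega$ is a convex Jordan curve; it therefore suffices to recover $\partial\Omega$ from its support function.

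For $\theta\in\mathbb{R}$ set $g_\theta(\zeta)=\Re\bigl(e^{-i\theta}c_s(\zeta)\bigr)+\rho_r(\zeta)$, so that the support function of $\Omega$ is $h(\theta)=\max_{q\in\Omega}\Re(e^{-i\theta}q)=\max_{\zeta\in\overline{\mathbb{D}}}g_\theta(\zeta)$. The principal step is to solve this optimisation. The key point is that $g_\theta$ is \emph{strictly concave} on $\mathbb{C}$: its quadratic part is $-s\Re(e^{-i\theta}\zeta^{2})-r|\zeta|^{2}$, whose real Hessian has trace $-4r<0$ and determinant $4(r^{2}-s^{2})>0$, hence is negative definite. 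Consequently the maximiser $\zeta_\theta$ over $\overline{\mathbb{D}}$ is unique, and solving $\partial g_\theta/\partial\overline{\zeta}=0$ produces the unique interior critical point $\tfrac{re^{i\theta}-s}{2(r^{2}-s^{2})}$. If this lies in $\overline{\mathbb{D}}$, equivalently $|re^{i\theta}-s|<2(r^{2}-s^{2})$, it equals $\zeta_\theta$, which is \eqref{eq:def_of_zeta_theta} with $r_\theta=r$. Otherwise the maximum is attained on $\partial\mathbb{D}$, on which $g_\theta(\zeta)=\Re(e^{-i\theta}c_s(\zeta))$ no longer involves the weight $r$ occurring in $\rho_r$; writing $g_\theta^{(x)}$ for the same functional with $r$ replaced by any $x>s$ (still strictly concave), we have $\arg\max_{\partial\mathbb{D}}g_\theta=\arg\max_{\partial\mathbb{D}}g_\theta^{(r_\theta)}$, where $r_\theta\ge r$ is the value for which the interior critical point $\tfrac{r_\theta e^{i\theta}-s}{2(r_\theta^{2}-s^{2})}$ of $g_\theta^{(r_\theta)}$ has modulus $1$ --- this is precisely \eqref{eq:definig_equation_of_r_theta}, and it has a unique solution $r_\theta\ge r$ because $x\mapsto|xe^{i\theta}-s|/\bigl(2(x^{2}-s^{2})\bigr)$ is continuous and strictly decreasing on $(s,\infty)$, with value $\ge1$ at $x=r$ and limit $0$ at $+\infty$. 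For $g_\theta^{(r_\theta)}$ that critical point already lies on $\partial\mathbb{D}$, hence is its global maximiser; so $\zeta_\theta=\tfrac{r_\theta e^{i\theta}-s}{2(r_\theta^{2}-s^{2})}$, in agreement with \eqref{eq:def_of_zeta_theta}.

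The supporting line of $\Omega$ with outward normal $e^{i\theta}$ meets $\Omega$ in the single point $c_s(\zeta_\theta)+\rho_r(\zeta_\theta)e^{i\theta}$: it lies in $\overline{\mathbb{D}}(c_s(\zeta_\theta),\rho_r(\zeta_\theta))\subset\Omega$ and realises $\Re(e^{-i\theta}\cdot)=h(\theta)$, while any $q\in\overline{\mathbb{D}}(c_s(\mu),\rho_r(\mu))\subset\Omega$ realising the same value would force $g_\theta(\mu)=h(\theta)$, hence $\mu=\zeta_\theta$ and $q=c_s(\zeta_\theta)+\rho_r(\zeta_\theta)e^{i\theta}$. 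Therefore $\gamma(\theta)=A(r,s)\bigl(c_s(\zeta_\theta)+\rho_r(\zeta_\theta)e^{i\theta}\bigr)\in\partial V(r,s)$ for every $\theta$. Since $\theta\mapsto r_\theta$ is continuous (by monotonicity, and it glues continuously with the branch $r_\theta\equiv r$ along $|re^{i\theta}-s|=2(r^{2}-s^{2})$), $\gamma$ is continuous with $\gamma(-\pi)=\gamma(\pi)$; $\gamma$ is onto $\partial V(r,s)$ because every boundary point of a convex body with interior carries an outward supporting normal $e^{i\theta}$, and the touching point there is exactly that $\gamma(\theta)$; and $\gamma$ is injective, since $\gamma(\theta_1)=\gamma(\theta_2)$ with $\theta_1\ne\theta_2$ would make $\gamma$ constant on $[\theta_1,\theta_2]$, which the explicit forms of $\zeta_\theta$ exclude. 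Hence $(-\pi,\pi]\ni\theta\mapsto\gamma(\theta)$ parametrises the Jordan curve $\partial V(r,s)$. From here Theorem~\ref{thm:expression-of-boundary-curve} follows by tracking the sign of $|re^{i\theta}-s|-2(r^{2}-s^{2})$: for $r+s\le\tfrac12$ it is $\ge0$ for all $\theta$, so $\zeta_\theta$ runs once around $\partial\mathbb{D}$ and $\gamma(\theta)=A(r,s)c_s(\zeta_\theta)$, giving case~(i); for $r-s\ge\tfrac12$ it is $\le0$ for all $\theta$, so $\zeta_\theta=\tfrac{re^{i\theta}-s}{2(r^{2}-s^{2})}$, and a short computation --- using the critical-point relation $2r\zeta_\theta+2se^{i\theta}\overline{\zeta_\theta}=e^{i\theta}$ to cancel the $\zeta_\theta^{2}$-terms --- shows $\gamma(\theta)$ is an affine function of $e^{i\theta}$, i.e. the circle \eqref{eq:ii-p-V}; and in the remaining regime the sign changes at $\theta=\pm\theta_0$ with $\cos\theta_0=\tfrac{r^{2}+s^{2}-4(r^{2}-s^{2})^{2}}{2rs}$, which lies in $(-1,1)$ exactly when $r+s>\tfrac12$ and $r-s<\tfrac12$, producing the circular arc \eqref{eq:iii-1-p-V} for $|\theta|<\theta_0$ and the arc \eqref{eq:iii-2-p-V} over the complementary subarc $J\ni\zeta_{\pi}=-1$.

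Finally, for the extremal functions fix $\theta$ and suppose $f\in\mathcal{H}_0(r,s)$ satisfies $f''(r)=\gamma(\theta)$. By the uniqueness of the maximiser, $\gamma(\theta)/A(r,s)$ lies on the boundary of $\overline{\mathbb{D}}(c_s(\mu),\rho_r(\mu))$ only for $\mu=\zeta_\theta$; since that disk equals $A(r,s)^{-1}\{f''(r):f\in\mathcal{H}_0(r,s),\ \tilde f(r)=\mu\}$ by Theorem~A, we must have $\tilde f(r)=\zeta_\theta$, and then the identity of the first paragraph forces $\tilde f'(r)=\tfrac{1-|\zeta_\theta|^{2}}{1-r^{2}}e^{i\theta}$. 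If $\zeta_\theta\in\mathbb{D}$ this attains the Schwarz--Pick bound, so $\tilde f$ is an automorphism of $\mathbb{D}$ by rigidity; the requirements $\tilde f(r)=\zeta_\theta$ and $\arg\tilde f'(r)=\theta$ determine $\tilde f$ uniquely, and substituting it into the inversion $f(z)=zT_{s/r}\bigl(T_{-r}(z)\tilde f(z)\bigr)$ of \eqref{eq:tilde-f} gives the asserted $f$; conversely, for $f$ of this form $\tilde f$ is literally the inner factor and the equality $f''(r)=\gamma(\theta)$ is a direct check. If instead $\zeta_\theta\in\partial\mathbb{D}$, then $|\tilde f(r)|=1$ at the interior point $r$ forces $\tilde f\equiv\zeta_\theta$, and the same inversion yields $f(z)=zT_{s/r}\bigl(\zeta_\theta T_{-r}(z)\bigr)$, with the converse again immediate. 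The main obstacle is the optimisation in the second paragraph --- establishing strict concavity, correctly separating the interior and boundary regimes, and proving the existence, uniqueness, continuity and geometric meaning of $r_\theta$; once that is done, the convex-geometry conclusions and the identification of the extremal functions are essentially bookkeeping.
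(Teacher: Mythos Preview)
Your outline is correct and follows the paper's route almost exactly: reduce to $(r,s)$, realise $V(r,s)=A(r,s)\bigcup_{\zeta}\overline{\mathbb D}(c_s(\zeta),\rho_r(\zeta))$, recover $\partial V(r,s)$ via the support function by maximising $k_\theta(\zeta)=\Re(e^{-i\theta}c_s(\zeta))+\rho_r(\zeta)$, show the maximiser is $\zeta_\theta$ of \eqref{eq:def_of_zeta_theta} using the monotonicity of $x\mapsto|xe^{i\theta}-s|/(2(x^2-s^2))$, and read off the extremal functions from the equality case of Theorem~A. Two points of comparison are worth recording. First, your observation that $g_\theta$ is strictly concave on $\mathbb C$ (Hessian with trace $-4r$ and determinant $4(r^2-s^2)$) is a genuine simplification: it gives uniqueness of the maximiser immediately and, combined with the trick of replacing $r$ by $r_\theta$ so that the critical point of $g_\theta^{(r_\theta)}$ lands on $\partial\mathbb D$, cleanly identifies the boundary maximiser; the paper instead derives \eqref{eqs:r_primve}--\eqref{eq:zeta_star_and_r_prime} by a Lagrange-type/contradiction argument. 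Second, your injectivity step (``$\gamma(\theta_1)=\gamma(\theta_2)$ would make $\gamma$ constant on $[\theta_1,\theta_2]$, which the explicit forms of $\zeta_\theta$ exclude'') is underspecified. Constancy of $\gamma$ on an arc forces $\zeta_\theta\equiv\zeta^\ast$ there; if $\zeta^\ast\in\mathbb D$ the interior formula $\zeta_\theta=(re^{i\theta}-s)/(2(r^2-s^2))$ rules this out, but if $\zeta^\ast\in\partial\mathbb D$ you still need the argument the paper gives in the proof of Proposition~\ref{prop:representation-tilde-V}: a common boundary maximiser over an arc of directions forces $c_s'(\zeta^\ast)=0$, hence $s=\tfrac12$, $\zeta^\ast=1$, and then $c_s(1)=\tfrac12\in\mathbb D(0,r)\subset\interior\tilde V(r,s)$, a contradiction. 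Once you insert that case, your proof is complete and slightly slicker than the paper's.
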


\section{Envelope of a family of circles}
We start this section with the proof of the second order Dieudonn\'e's lemma, which is needed to determine the extremal functions in Theorem \ref{thm:boundary-curve}.
\begin{proof}[Proof of Theorem A]
Let $f\in \H_0(z_0,w_0)$ and define $\tilde{f}$ by \eqref{eq:tilde-f}.
By differentiating both sides of  (\ref{eq:diff-of-g-and-f}) and substituting $z =z_0$
we have
\begin{align}
\label{eq:2nd-diff-of-g-and-f}
    & \frac{2\overline{z_0}\tilde{f}(z_0)}{(1-|z_0|^2)^2}
   + \frac{2\tilde{f}'(z_0)}{1-|z_0|^2} \\
= \, &
 \frac{2 \overline{\left( \frac{w_0}{z_0}\right)}}
{\left( 1 - \left| \frac{w_0}{z_0}\right|^2 \right)^2}
   \left( \frac{z_0f'(z_0)-w_0}{z_0^2} \right)^2
\nonumber \\
& + \frac{1}{1 - \left| \frac{w_0}{z_0}\right|^2}
  \frac{z_0^2f''(z_0)-2(z_0 f'(z_0) - f(z_0)) }{z_0^3}.
\nonumber
\end{align}
Combining this and \eqref{eq:tilde-f-f-z0},
we have
\[
   f''(z_0)
   =
  \frac{2 \left( 1- \left| \frac{w_0}{z_0} \right|^2 \right) }{(1-|z_0|^2)^2}
   \tilde{f}(z_0) \left( 1- z_0 \overline{\left( \frac{w_0}{z_0} \right) } \tilde{f}(z_0) \right)
 + \frac{2 \left( 1- \left| \frac{w_0}{z_0} \right|^2 \right)}{1-|z_0|^2}z_0\tilde{f}'(z_0).
\]
By (\ref{eq:diff-of-g-and-f-at-z_0}),
$  f'(z_0)  = \dfrac{w_0}{z_0}
+ \dfrac{|z_0|^2-|w_0|^2}{\ov{z}_0(1-|z_0|^2)}  \lambda$
holds if and only if $\tilde{f}(z_0) = \lambda$. The Schwarz-Pick inequality
$|\tilde{f}'(z_0)| \leq \dfrac{1-|\tilde{f}(z_0)|^2}{1-|z_0|^2}= \dfrac{1-|\lambda|^2}{1-|z_0|^2}$
implies
$f''(z_0) \in A(z_0,w_0)
 \overline{\mathbb{D}}(c(\lambda), \rho(\lambda))$.

Conversely for $\lambda \in \mathbb{D}$ and $\a\in \ov{\D}$
define analytic functions $\tilde{f}_{\lambda, \a}$ and $f_{\lambda, \a}$
in $\mathbb{D}$ by
\[
  \tilde{f}_{\lambda, \a}(z)
 = T_{\lambda}\left(\frac{|z_0|}{z_0}
 \a T_{-z_0}(z)\right),
 \quad
 f_{\lambda, \a}(z)
 = z T_{\frac{w_0}{z_0}} \left(T_{-z_0}(z) \tilde{f}_{\lambda, \a}(z) \right).
\]
Then $f_{\lambda, \a} \in \mathcal{H}_0(z_0,w_0)$,
$\tilde{f}_{\lambda, \a}(z_0) = \lambda $ and
$f_{\lambda, \a} ''(z_0) = A(z_0,w_0)
\{c(\lambda) + \rho(\lambda) \a\}$. It follows that
$ A(z_0,w_0) \overline{\mathbb{D}} (c(\lambda),  \rho(\lambda)) $ is contained in the variability region.
Furthermore by the uniqueness part of the Schwarz lemma
$f''(z_0) = A(z_0,w_0)
\{c(\lambda) + \rho(\lambda) e^{i \theta }\}$ for some
$f \in \mathcal{H}_0(z_0,w_0)$ if and only if
$f=f_{\lambda,e^{i \th}}$.

Similarly for $\lambda \in \partial \mathbb{D}$ define
$f_\lambda$ by
\[
 f_\lambda(z)
  =z  T_{\frac{w_0}{z_0}} \left( \lambda T_{-z_0}(z)\right).
\]
Then $f_\lambda \in \mathcal{H}_0(z_0,w_0)$ and
$f_\lambda ''(z_0) = A(z_0,w_0) c(\lambda )$.
Again by the uniqueness part of the Schwarz lemma
$f''(z_0) = A(z_0,w_0) c(\lambda )$
for some $f \in \mathcal{H}_0(z_0,w_0)$ if and only if
$f= f_\lambda$.
Thus the proof is completed.
\end{proof}
Let $0 \leq s < r< 1$ and
\begin{equation}
\label{def:tilde-V-r-s}
  \tilde{V}(r,s)
 = \bigcup_{\zeta \in \overline{\mathbb{D}}}
  \overline{\mathbb{D}}(c_s( \zeta), \rho_r (\zeta )) .
\end{equation}
Then by Theorem A,
$V(r,s ) =  A(r,s) \tilde{V}(r,s)$.
Thus the set $\tilde{V}(r,s)$
is a compact and convex subset of $\mathbb{C}$
with $\mathbb{D}(0,r) \subset \tilde{V}(r,s)$.
Therefore $\tilde{V}(r,s)$ is a convex closed Jordan domain
enclosed by the Jordan curve $\partial \tilde{V}(r,s)$.
The determination of $\partial V(r,s)$ is reduced to
that of $\tilde{V}(s,r)$.
\begin{prop}
\label{main-thm}
For $\theta \in \mathbb{R}$ there exists a unique $v_\theta \in \tilde{V}(r,s)$
satisfying
\begin{equation}
\label{ineq:max_of_real_part}
   \Re \{ v e^{-i \theta }\} \leq \Re \{ v_\theta e^{-i \theta }\}
    \quad \text{for all } v \in \tilde{V}(r,s) .
\end{equation}
Furthermore, $v_\theta$ can be expressed as
\begin{equation}
\label{eq:expression_of_v_theta}
   v_\theta
    =
    \begin{cases}
     c_s( \zeta_\theta )  + \rho_r( \zeta_\theta ) e^{i \theta },  \quad &
     |re^{i \theta}-s| < 2(r^2-s^2) , \\
    c_s( \zeta_\theta ) ,  \quad &
     |re^{i \theta}-s| \geq 2(r^2-s^2) ,
    \end{cases}
\end{equation}
where $\zeta_\theta$ is defined in Theorem \ref{thm:boundary-curve}.
\end{prop}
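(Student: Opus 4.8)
The plan is to read $v_\th$ off the support function of $\tilde V(r,s)$. Fix $\th\in\R$. Since the support point of a disk $\ov{\D}(c,\rho)$ in the direction $\e$ is $c+\rho\e$, writing $\tilde V(r,s)$ as the union \eqref{def:tilde-V-r-s} gives
\[
   \sup_{v\in\tilde V(r,s)}\Re\{v e^{-i\th}\}=\sup_{\z\in\ov{\D}}g(\z),\qquad
   g(\z):=\Re\{c_s(\z)e^{-i\th}\}+\rho_r(\z)=\Re\{(\z-s\z^2)e^{-i\th}\}+r(1-|\z|^2).
\]
So the task reduces to locating the maximizer of $g$ over the compact convex set $\ov{\D}$ and identifying the corresponding support point.

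First I would show that $g$ is \emph{strictly concave} on all of $\C$. Its only non-affine terms are $-s\Re\{\z^2 e^{-i\th}\}$, a harmonic quadratic whose (constant) Hessian is trace-free with eigenvalues $\pm2s$, and $-r|\z|^2$, with Hessian $-2rI$; hence the Hessian of $g$ has eigenvalues $-2(r-s)$ and $-2(r+s)$, both negative because $0\le s<r$. Consequently $g$ has a unique maximizer $\z_\th\in\ov{\D}$. This yields both existence of $v_\th$ (the continuous map $v\mapsto\Re\{ve^{-i\th}\}$ attains its maximum on the compact set $\tilde V(r,s)$) and uniqueness: if $v\in\tilde V(r,s)$ attains that maximum, then $v\in\ov{\D}(c_s(\z),\rho_r(\z))$ for some $\z$, so $\Re\{v e^{-i\th}\}\le\Re\{c_s(\z)e^{-i\th}\}+\rho_r(\z)=g(\z)\le\sup_{\z'\in\ov{\D}}g(\z')$, and equality throughout forces $\z=\z_\th$ and $v=c_s(\z_\th)+\rho_r(\z_\th)\e=:v_\th$ (when $\rho_r(\z_\th)=0$ this reads $v=c_s(\z_\th)$).

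Next I would identify $\z_\th$. From $\p g/\p\ov{\z}=\tfrac12(1-2s\ov{\z})\e-r\z$, setting the derivative to zero and eliminating $\ov{\z}$ gives the unconstrained critical point $\z^{*}=\dfrac{r\e-s}{2(r^2-s^2)}$. If $|r\e-s|<2(r^2-s^2)$ then $\z^{*}\in\D$, so by concavity $\z_\th=\z^{*}$, $\rho_r(\z_\th)>0$, and $v_\th=c_s(\z_\th)+\rho_r(\z_\th)\e$ — the first case of \eqref{eq:expression_of_v_theta}. Otherwise $\z^{*}\notin\D$; a maximizer lying in $\D$ would be an unconstrained local, hence global, maximum, which is impossible, so $\z_\th\in\p\D$ and $\rho_r(\z_\th)=0$. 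There the Lagrange multiplier condition for maximizing $g$ over $\ov{\D}$ reads $\p g/\p\ov{\z}=\mu\z_\th$ with $\mu\ge0$; eliminating $\ov{\z}_\th$ exactly as before turns this into $|\rho\e-s|=2(\rho^2-s^2)$ with $\rho:=r+\mu>s$ and $\z_\th=\dfrac{\rho\e-s}{2(\rho^2-s^2)}$, i.e. $\rho=r_\th$ and $\z_\th$ is precisely the point of Theorem \ref{thm:boundary-curve}, with $v_\th=c_s(\z_\th)$ — the second case.

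The step I expect to be the main obstacle is closing this boundary case cleanly: one must know that $r_\th$ is a \emph{well-defined} single number and that $r_\th\ge r$, so that $\mu=r_\th-r\ge0$ and $\z_\th$ is genuinely the maximizer rather than merely a critical point. Both follow once one shows that $\phi(\rho):=\bigl|\tfrac{\rho\e-s}{2(\rho^2-s^2)}\bigr|$ is strictly decreasing on $(s,\infty)$, from a limit $\ge1$ (possibly $+\infty$) as $\rho\to s^{+}$ down to $0$ as $\rho\to\infty$; the sign of $\phi'$ reduces to the inequality $\rho^3-3s\cos\th\,\rho^2+3s^2\rho-s^3\cos\th\ge(\rho-s)^3>0$ for $\rho>s$. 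Granting this, $r_\th$ is the unique root of $\phi(\rho)=1$, while $|r\e-s|\ge2(r^2-s^2)$ is exactly $\phi(r)\ge1$, hence $r\le r_\th$. A little extra care handles the degenerate sub-cases $s=0$ and $\th=0$, and the borderline $|r\e-s|=2(r^2-s^2)$, where $r_\th=r$, $\mu=0$, and the two branches of \eqref{eq:expression_of_v_theta} coincide.
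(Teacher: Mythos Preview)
Your argument is correct and follows the same skeleton as the paper's: both reduce the problem to maximizing the function $k_\theta(\zeta)=\Re\{c_s(\zeta)e^{-i\theta}\}+\rho_r(\zeta)$ (your $g$) over $\overline{\D}$, locate the interior critical point $\zeta^*=\dfrac{re^{i\theta}-s}{2(r^2-s^2)}$, and appeal to the strict monotonicity of $h_\theta(x)=\dfrac{|xe^{i\theta}-s|}{2(x^2-s^2)}$ (your $\phi$) to treat the boundary case; the cubic inequality you write for $\phi'$ is exactly the one in the paper's Lemma.

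The genuine addition in your version is the observation that $g$ is \emph{strictly concave} on all of $\C$ (Hessian eigenvalues $-2(r\pm s)$). This buys you several things the paper works harder for: uniqueness of the maximizer over $\overline{\D}$, hence of $v_\theta$, is immediate; the interior/boundary dichotomy becomes ``$\zeta^*\in\D$ or not'' without proofs by contradiction; and in the boundary case the KKT condition $\partial g/\partial\overline{\zeta}=\mu\zeta$ with $\mu\ge0$ reproduces the paper's equations \eqref{eq:max_on_the_boundaryI}--\eqref{eq:zeta_star_and_r_prime} at one stroke, with $\rho=r+\mu$ playing the role of the paper's $r'$. The paper, by contrast, never invokes concavity: it argues each case by assuming the wrong location for $\zeta_\theta^*$ and deriving a contradiction from the monotonicity lemma, and deduces uniqueness of $v_\theta$ only a posteriori from the explicit formula. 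Your route is shorter and more conceptual; the paper's is slightly more elementary in that it avoids any convex-analysis language.
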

Before proving Proposition \ref{main-thm} we show the following lemma.
\begin{lem}
\label{lemma:monotonicity}
For $\theta \in \mathbb{R}$ and $s\ge 0$, define
a positive and continuous function $h_\theta$ by
\[
   h_\theta (x) = \frac{|xe^{i \theta}-s|}{2(x^2-s^2)}, \quad x > s .
\]
Then $h_\theta $ is strictly decreasing in $x>s$ for each fixed $\theta$
and $\lim\limits_{x \rightarrow \infty} h_\theta (x) = 0$.
\end{lem}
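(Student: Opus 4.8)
The plan is to write $h_\theta(x)^2$ explicitly and differentiate. Since $|xe^{i\theta}-s|^2 = x^2 - 2sx\cos\theta + s^2$ and $x^2 - s^2 > 0$ for $x > s$, we have
\[
 h_\theta(x)^2 = \frac{x^2 - 2sx\cos\theta + s^2}{4(x^2-s^2)^2}.
\]
Because $h_\theta$ is positive, it suffices to show $x \mapsto h_\theta(x)^2$ is strictly decreasing on $(s,\infty)$. I would compute the derivative of the right-hand side; after clearing the common positive factor $4(x^2-s^2)^3$ in the numerator of the quotient-rule expression, the sign of $\frac{d}{dx}h_\theta(x)^2$ is governed by
\[
 (2x - 2s\cos\theta)(x^2 - s^2) - (x^2 - 2sx\cos\theta + s^2)\cdot 2x,
\]
which simplifies to a polynomial in $x$ (with coefficients depending on $s$ and $\cos\theta$). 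The routine algebra should collapse this to something manifestly negative for $x > s \ge 0$ — I expect a form like $-2x^3 + \cdots$ that one rewrites as a negative combination of $(x-s)$, $(x+s)$ and $x$; e.g. grouping to exhibit a factor such as $-2s(x - s\cos\theta)(x+\cdots)$ or completing the argument by noting $x > s \ge s\cos\theta$. The key point is that every surviving term carries a minus sign once $x>s\ge 0$ is used.

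The main obstacle — really the only one — is the bookkeeping in that simplification, together with handling the degenerate parameter ranges cleanly: when $s = 0$ the function is simply $h_\theta(x) = 1/(2x)$, trivially strictly decreasing with limit $0$, so one may assume $s > 0$; and one should note $\cos\theta \le 1$ so that the numerator $x^2 - 2sx\cos\theta + s^2 \ge (x-s)^2 > 0$, confirming $h_\theta$ is positive and finite on $(s,\infty)$ as claimed. The limit $\lim_{x\to\infty} h_\theta(x) = 0$ is immediate from the explicit formula, since the numerator grows like $x^2$ while the denominator grows like $x^4$. Continuity on $(s,\infty)$ is clear since the denominator does not vanish there.

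If the direct differentiation turns out messy, an alternative I would try is to substitute $t = x^2 - s^2 > 0$, so that $x = \sqrt{t+s^2}$ and
\[
 h_\theta(x)^2 = \frac{t + 2s^2 - 2s\cos\theta\sqrt{t+s^2}}{4t^2},
\]
and then show this is decreasing in $t$; the term $\frac{1}{4t}$ and the term $\frac{2s^2}{4t^2}$ are each strictly decreasing, and the remaining term $-\frac{s\cos\theta\sqrt{t+s^2}}{2t^2}$ can be checked to be nonincreasing (it is $\le 0$ when $\cos\theta \ge 0$, and when $\cos\theta < 0$ it is positive but of lower order, dominated by the decrease of the first two). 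Either route finishes the lemma; I anticipate the first (direct) computation is shortest once the polynomial is factored correctly.
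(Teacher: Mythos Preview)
Your main approach---differentiate a monotone function of $h_\theta$ and show the resulting numerator is negative---is exactly the paper's: the paper differentiates $\log h_\theta$ instead of $h_\theta^2$, but either way the sign is governed by the cubic $-x^3+3sx^2\cos\theta-3s^2x+s^3\cos\theta$, and the clean completion you are groping for is simply to bound $\cos\theta\le 1$, giving numerator $\le -(x-s)^3<0$. Two small corrections: in your displayed quotient-rule expression the factor on the second term should be $4x$, not $2x$ (the chain rule on $(x^2-s^2)^2$ contributes an extra $2$); and your fallback substitution argument has the cases reversed---when $\cos\theta<0$ the third term is positive and \emph{decreasing} (so all three pieces decrease and you are done), whereas when $\cos\theta>0$ the third term is negative but \emph{increasing}, so ``it is $\le 0$'' is irrelevant and a genuine domination argument would be needed---so stick with the direct route.
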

\begin{proof}
The lemma easily follows from
\begin{align*}
   \frac{d \log h_\theta }{dx} (x)
 = \, &
  \frac{2(x-s\cos \theta )}{x^2-2sx \cos \theta +s^2}
 - \frac{2x}{x^2- s^2}
\\
  = \, &
  \frac{-x^3 + 3sx^2 \cos \theta - 3s^2x + s^3 \cos \theta  }
 {(x^2-2sx \cos \theta +s^2)(x^2-s^2)}
\\
  \leq \, &
  \frac{-(x-s)^3}{(x^2-2sx \cos \theta +s^2)(x^2-s^2)} < 0.
\end{align*}
\end{proof}

\begin{proof}[Proof of Proposition \ref{main-thm}]
Let $\theta \in \mathbb{R}$.
We show the existence and uniqueness of $v_\theta \in \partial \tilde{V}(r,s)$
satisfying (\ref{ineq:max_of_real_part})
and that $v_\theta$ can be expressed as (\ref{eq:expression_of_v_theta}).

Since  $\tilde{V}(r,s)$ is compact
and the function $\Re \{ v e^{-i \theta} \}$, $v \in \mathbb{C}$
is nonconstant and harmonic,
there exists $v_\theta \in \partial \tilde{V}(r,s)$
such that
\[
   \Re \{ ve^{-i \theta} \} \leq  \Re \{ v_\theta e^{-i \theta} \}
 \quad \text{for all } v \in \tilde{V}(r,s) .
\]
By (\ref{def:tilde-V-r-s})
there exists $\zeta_\theta^* \in \overline{\mathbb{D}}$
and $\varepsilon_\theta^* \in \partial \overline{\mathbb{D}}$
such that
$v_\theta
= c_s( \zeta_\theta^*) + \rho_\theta (\zeta_\theta^*)\varepsilon_\theta^* $.
By (\ref{def:tilde-V-r-s})
the above inequality is equivalent to
\[
  \Re \{ c_s(\zeta ) e^{-i \theta} \}
 + \rho_r(\zeta)  \Re \{ \varepsilon e^{-i \theta}\}
 \leq
  \Re \{ c_s(\zeta_\theta^* ) e^{-i \theta} \}
 + \rho_r(\zeta_\theta^* )  \Re \{ \varepsilon_\theta^* e^{-i \theta}\} ,
 \; \zeta , \varepsilon \in  \overline{\mathbb{D}} .
\]
By substituting $\zeta = \zeta_\theta^*$ we have
\[
  \rho_r(\zeta_\theta^* )  \Re \{ \varepsilon e^{-i \theta}\}
 \leq
 \rho_r(\zeta_\theta^* )  \Re \{ \varepsilon_\theta^* e^{-i \theta}\}
\]
for all $\varepsilon \in  \overline{\mathbb{D}}$.

If $\zeta_\theta^* \in \mathbb{D}$,
then $ \rho_r(\zeta_\theta^* ) > 0$ and we conclude
$\varepsilon_\theta^* = e^{i \theta }$.
Even if $\zeta_\theta^* \in \partial \mathbb{D}$, since $ \rho_r(\zeta_\theta^* ) =0$,
we may assume $\varepsilon_\theta^* = e^{i \theta }$
and
\begin{equation}
\label{eq:pre_expression}
  v_\theta = c_s(\zeta_\theta^* ) + \rho_r(\zeta_\theta^*) e^{i \theta}.
\end{equation}
Define a continuous function $k_\theta$ on $\overline{\mathbb{D}}$
by
\[
  k_\theta (\zeta ) =   \Re \{ c_s(\zeta ) e^{-i \theta} \}
 + \rho_r(\zeta) .
\]
Then  we have
\begin{equation}
  k_\theta (\zeta )
   \leq
  k_\theta (\zeta_\theta^*  )
 \text{ for all } \zeta  \in  \overline{\mathbb{D}},
\end{equation}
i.e., $k_\theta $ attains a maximum at $\zeta=\zeta_\theta^*$.

Assuming $|re^{i \theta } -s| < 2(r^2-s^2)$
we shall show that $\zeta_\theta^* \in \mathbb{D}$ and
$\zeta_\theta^* = \zeta_\theta$, where $\zeta_\theta$ is defined
in Theorem \ref{thm:boundary-curve}.
Suppose, on the contrary, $\zeta_\theta^* \in \partial \mathbb{D}$.
Putting $\zeta = te^{i \varphi }$
we have
\begin{align}
\label{eq:max_on_the_boundaryI}
  \begin{cases}
   \frac{\partial k_\theta }{\partial \varphi} ( \zeta_\theta^*)
 =  - \Im \{ \zeta_\theta^* (1-2s \zeta_\theta^*) e^{-i \theta }\} = 0 ,
\\
  \frac{\partial k_\theta }{\partial t} ( \zeta_\theta^*)
 = \Re \{ \zeta_\theta^* (1-2s \zeta_\theta^*) e^{-i \theta }\} -2 r \geq 0 .
  \end{cases}
\end{align}
Therefore there exists $r' \geq r$ such that
\begin{equation}
\label{eq:r_primve}
 \zeta_\theta^* (1-2s \zeta_\theta^*) e^{-i \theta } = 2 r' .
\end{equation}
By $\zeta_\theta^* \in \partial \mathbb{D}$
we have
\begin{align}
\label{eqs:r_primve}
\begin{cases}
(1-2s \zeta_\theta^*) e^{-i \theta } = 2 r' \overline{\zeta_\theta^*}, \\
 (1-2s \overline{\zeta_\theta^*}) e^{i \theta } = 2 r' \zeta_\theta^* ,
\end{cases}
\end{align}
which implies
\begin{equation}
\label{eq:zeta_star_and_r_prime}
   \zeta_\theta^* = \frac{r'e^{i \theta}- s}{2(r'^2 - s^2)} .
\end{equation}
By Lemma \ref{lemma:monotonicity}
this implies
\[
  1 = | \zeta_\theta^* |
 = \frac{|r'e^{i \theta}- s|}{2(r'^2 - s^2)}
 \leq \frac{|re^{i \theta}- s|}{2(r^2 - s^2)} < 1,
\]
which is a contradiction. Thus we conclude $\zth^*\in \D$.

Since $k_\theta $ attains a maximum at the interior point
$\zeta_\theta^* \in \mathbb{D}$, we obtain
\begin{equation}\label{eq:p-k-theta}
   \frac{\partial k_\theta}{\partial \zeta } (\zeta_\theta^*)=
\frac{1}{2} (1-2s \zeta_\theta^* ) e^{-i \theta }  - r \overline{\zeta_\theta^*}
 = 0 ,
 \end{equation}
 and hence
\begin{equation}\label{eq:zeta-theta}
 \zeta_\theta^* =
 \frac{re^{i \theta}-s}{2(r^2-s^2)} = \zeta_\theta .
\end{equation}

Similarly assuming $|re^{i \theta } -s| \geq  2(r^2-s^2)$
we shall show that $\zeta_\theta^* \in \partial \mathbb{D}$ and
$\zeta_\theta^* = \zeta_\theta$.
Suppose, on the contrary, $\zeta_\theta^* \in \mathbb{D}$.
Then \eqref{eq:p-k-theta} holds
and hence
\[
     \zeta_\theta^*
  = \frac{re^{i\theta }-s}{2(r^2-s^2)},
\]
which contradicts $| \zeta_\theta^* | < 1 $.
Therefore $\zeta_\theta^* \in \partial \mathbb{D}$.
This implies
(\ref{eq:max_on_the_boundaryI})
and there exists $r' \geq r$ satisfying  (\ref{eq:r_primve})
and (\ref{eq:zeta_star_and_r_prime}).
Again from $\zeta_\theta^* \in \partial \mathbb{D} $
and Lemma \ref{lemma:monotonicity}
it follows that $r'=r_\theta$ and
$\zeta_\theta^* = \zeta_\theta$.

Now we have shown that $\zeta_\theta^* = \zeta_\theta$.
Combining this and (\ref{eq:pre_expression})
$v_\theta$ can be expressed as (\ref{eq:expression_of_v_theta}).
This also implies the uniqueness of $v_\theta$.
\end{proof}
\begin{prop}\label{prop:representation-tilde-V}
The mapping
\[
   (-\pi , \pi ] \ni \th \mapsto v_\theta \in \partial \tilde{V}(r,s)
\]
is a continuous bijection and gives a parametric representation of
$\partial \tilde{V}(r,s)$.
\end{prop}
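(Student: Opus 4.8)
The plan is to verify three properties of the map $\Phi\colon(-\pi,\pi]\ni\theta\mapsto v_\theta$: that it takes values in $\partial\tilde V(r,s)$, that it is continuous and $2\pi$-periodic, and that it is a bijection onto $\partial\tilde V(r,s)$. Once all three are in hand, the last assertion of the proposition is automatic: the periodic extension of $\Phi$ is a continuous bijection from the circle $\mathbb{R}/2\pi\mathbb{Z}$ onto the Hausdorff space $\partial\tilde V(r,s)$, hence a homeomorphism, so $\Phi$ is a genuine parametrization of the Jordan curve $\partial\tilde V(r,s)$. That $v_\theta\in\partial\tilde V(r,s)$ is already contained in Proposition \ref{main-thm}, since a maximizer of the nonconstant linear functional $v\mapsto\Re\{ve^{-i\theta}\}$ over the convex body $\tilde V(r,s)$ must lie on its boundary.

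For continuity I would argue directly from the closed formula \eqref{eq:expression_of_v_theta}. Put $U=\{\theta:|re^{i\theta}-s|<2(r^2-s^2)\}$, an open set, and let $C=\mathbb{R}\setminus U$. On $U$ one has $\zeta_\theta=\frac{re^{i\theta}-s}{2(r^2-s^2)}$, so $v_\theta=c_s(\zeta_\theta)+\rho_r(\zeta_\theta)e^{i\theta}$ is real-analytic in $\theta$. On $C$ one has $v_\theta=c_s(\zeta_\theta)$ with $\zeta_\theta=\frac{r_\theta e^{i\theta}-s}{2(r_\theta^2-s^2)}$, where $r_\theta\ge r$ is the unique root of $h_\theta(x)=1$; this root exists because $h_\theta(r)\ge 1$ on $C$ while $h_\theta(x)\le\frac1{2(x-s)}\to 0$, and it depends continuously on $\theta$ by a standard subsequence argument using the joint continuity and strict $x$-monotonicity of $h_\theta(x)$ from Lemma \ref{lemma:monotonicity} (the roots stay in a fixed compact subinterval of $(s,\infty)$, being $\ge r$ and bounded above since $1=h_\theta(r_\theta)\le\frac1{2(r_\theta-s)}$). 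Hence $\zeta_\theta$ and $v_\theta$ are continuous on $C$ too. Finally, at a point $\hat\theta\in\partial U=\partial C$ one has $h_{\hat\theta}(r)=1$, so $r_{\hat\theta}=r$, $|\zeta_{\hat\theta}|=1$ and $\rho_r(\zeta_{\hat\theta})=0$; the two branch formulas agree there and their one-sided limits match, so $v_\theta$ is continuous on all of $\mathbb{R}$, and $2\pi$-periodicity is evident.

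Surjectivity onto $\partial\tilde V(r,s)$ is the easy half. Given $v^{*}\in\partial\tilde V(r,s)$, since $\tilde V(r,s)$ is compact, convex and has $0$ in its interior, it has a supporting line at $v^{*}$ whose outward unit normal is $e^{i\theta}$ for some $\theta$; then $\Re\{v^{*}e^{-i\theta}\}=\max_{v\in\tilde V(r,s)}\Re\{ve^{-i\theta}\}$, and by the uniqueness clause of Proposition \ref{main-thm}, $v^{*}=v_\theta$. For injectivity — which I expect to be the main obstacle — I would argue by contradiction: suppose $v_{\theta_1}=v_{\theta_2}=v^{*}$ with $\theta_1\ne\theta_2$ in $(-\pi,\pi]$. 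Then $v^{*}$ lies on supporting lines with two distinct outward normals $e^{i\theta_1},e^{i\theta_2}$, which are moreover non-opposite (else $\Re\{ve^{-i\theta_1}\}$ would be constant on $\tilde V(r,s)$, impossible since $\tilde V(r,s)$ has interior points). By convexity the normal cone of $\tilde V(r,s)$ at $v^{*}$ then contains a nondegenerate wedge, so for every $\theta$ in some nondegenerate interval $I$ the unique maximizer of $\Re\{ve^{-i\theta}\}$ is $v^{*}$, i.e.\ $v_\theta\equiv v^{*}$ on $I$. It remains to show this cannot occur, i.e.\ that $\theta\mapsto v_\theta$ is constant on no nondegenerate interval.

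Here the argument splits according to whether $I$ meets $U$. If it does, $I$ contains an open subinterval of $U$, and on it — using $\zeta_\theta=\frac{re^{i\theta}-s}{2(r^2-s^2)}$ together with the critical-point relation \eqref{eq:p-k-theta} — one simplifies $v_\theta=\tfrac12\zeta_\theta+re^{i\theta}=\bigl(\tfrac{r}{4(r^2-s^2)}+r\bigr)e^{i\theta}-\tfrac{s}{4(r^2-s^2)}$, a nonconstant circular parametrization, a contradiction. Otherwise $I\subseteq C$, where $v_\theta=c_s(\zeta_\theta)$ with $|\zeta_\theta|=1$; since $c_s(\zeta)=\zeta-s\zeta^2$ attains each value at most twice while $\zeta_\theta$ is continuous, $v_\theta\equiv v^{*}$ forces $\zeta_\theta\equiv e^{i\phi_0}$ for some $\phi_0$, so $r_\theta e^{i\theta}=s+2(r_\theta^2-s^2)e^{i\phi_0}$; comparing moduli gives $r_\theta^2-s^2=\tfrac14(1-4s\cos\phi_0)$, a constant, hence $r_\theta$ is constant and then so is $e^{i\theta}$ on $I$ — again a contradiction. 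The delicate point is precisely this $C$-case: there $r_\theta$ is available only implicitly through Lemma \ref{lemma:monotonicity}, and $c_s$ can fail to be injective on $\partial\mathbb{D}$ when $s\ge\tfrac12$; the modulus comparison is what circumvents both issues and avoids any case split in $s$. Some routine care is also needed to match the two branch formulas across $\partial U$ in the continuity step and to phrase the final homeomorphism claim in terms of the circle $\mathbb{R}/2\pi\mathbb{Z}$.
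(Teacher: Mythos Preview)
Your argument is correct, and the continuity step is essentially the paper's (the paper invokes the implicit function theorem where you use a subsequence/compactness argument, but the case split and the matching at $\partial U$ are the same). The real divergence is in injectivity and surjectivity. For surjectivity the paper relies on the a priori fact that $\partial\tilde V(r,s)$ is a Jordan curve and then appeals to the intermediate value theorem, whereas you argue directly via supporting lines and the uniqueness clause of Proposition~\ref{main-thm}; your route is shorter and does not need the Jordan-curve fact as input. For injectivity both arguments begin at the same place---two distinct supporting normals at a single boundary point $v^*$---but the paper finishes geometrically: it observes that either a full circle $\partial\mathbb D(c_s(\zeta^*),\rho_r(\zeta^*))$ (when $\zeta^*\in\mathbb D$) or the curve $c_s(\partial\mathbb D)$ (when $\zeta^*\in\partial\mathbb D$) passes through $v^*$ inside $\tilde V(r,s)$, which is incompatible with a corner of opening $<\pi$, and then disposes of the degenerate case $c_s'(\zeta^*)=0$ (i.e.\ $s=\tfrac12$, $\zeta^*=1$) separately. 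You instead combine the normal-cone convexity with the uniqueness in Proposition~\ref{main-thm} to force $v_\theta\equiv v^*$ on a whole interval, and then rule this out by explicit computation---the neat identity $v_\theta=\tfrac12\zeta_\theta+re^{i\theta}$ on $U$, and a modulus comparison on $C$ that pins down $r_\theta$ without ever touching the cusp case. Your approach is more computational but entirely self-contained and sidesteps the somewhat delicate smooth-curve/corner analysis; the paper's approach is more geometric and makes the ``no corner points'' statement explicit, which is of independent interest (cf.\ the remark following the proposition).
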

\begin{rmk}
For a compact convex set $W\subset \C$, a point $w_0\in \p W$ is called a corner point of $W$ if there exists two closed half planes $H_1$ and $H_2$ such that $W\subset H_1 \cap H_2$ and $w_0\in \p H_1 \cap \p H_2$. For details, see \cite[Section 3.4]{pommerenke2013boundary}. In the following proof we show there is no corner points of $\tilde{V}(r,s)$, which is equivalent to the injectivity of the mapping $\th \mapsto v_\theta$.
\end{rmk}
\begin{proof}[Proof of Proposition \ref{prop:representation-tilde-V}]
Firstly, we show the mapping $(-\pi , \pi ] \ni \theta \mapsto v_\theta$
is continuous.
By (\ref{eq:def_of_zeta_theta}) and $r_{-\theta} = r_\theta$
it suffices to show the mapping
$(-\pi , \pi ] \ni \theta \mapsto r_\theta$ is continuous
at any $\theta_0 \in [0, \pi ]$.

(I).  Assume $|re^{i \theta_0} -s| < 2(r^2-s^2)$.
Then $|re^{i \theta} -s| < 2(r^2-s^2)$ holds
on some neighborhood $I$ of $\theta_0$
and hence $r_\theta \equiv r$ on $I$. Thus
$r_\theta$ is continuous at $\theta_0$.

(II). Assume $|re^{i \theta_0} -s| > 2(r^2-s^2)$.
Then $|re^{i \theta} -s| > 2(r^2-s^2)$ holds
on some neighborhood $I$ of $\theta_0$.
Hence $r_\theta $ is the unique solution to
the equation (\ref{eq:definig_equation_of_r_theta}),
which is equivalent to
$h_\theta (x) -1 = 0$.
In this case the continuity of $r_\theta$ at $\theta_0$
is a consequence of the inequality $\dfrac{dh_\theta}{dx} (x) <0$ (see Lemma \ref{lemma:monotonicity}) and the implicit function theorem.

(III).  Assume $|re^{i \theta_0} -s| = 2(r^2-s^2)$.
As in the case (II) there exists a neighborhood $I$ of $\theta_0$
the equation $h_\theta (x)-1=0$ has
the unique solution $x(\theta)$ which is continuous in $\theta$
and $x(\theta_0) = r$.
Since $|re^{i \theta} -s| < 2(r^2-s^2)$ for $\theta \in I_1 := I \cap [0, \theta_0 )$,
we have $r_\theta \equiv r$ on $I_1$.
Similarly  since $|re^{i \theta} -s| > 2(r^2-s^2)$
for $\theta \in I_2 := I \cap (\theta_0 , \pi ] $,
we have $r_\theta \equiv x( \theta )$ on $I_2$.
Therefore $r_\theta$ is continuous at $\theta = \theta_0$, as required.

Secondly, we show the mapping $(-\pi , \pi ] \ni \theta \mapsto v_\theta$
is injective.
Suppose,  on the contrary, $v_{\theta_1}  = v_{\theta_2} = v^*$
for some $- \pi < \theta_1 < \theta_2 \leq \pi$.
For $j=1,2$ define a half plane $H_j$ by
$$H_j =\{ w \in \mathbb{C}:
  \Re ( we^{-i \theta_j } ) \leq \Re ( v^* e^{-i \theta_j } )\}.
$$
Then $v^* \in \p\tilde{V}(r,s) \subset H_1 \cap H_2$,
$v^* \in \partial H_1 \cap \partial H_2$.
And the opening angle $\alpha$ of $H_1\cap H_2$ is less than $\pi$, which means $v^*$ is a corner point of $\tilde{V}(r,s)$.
Take  $\z^*\in \ov{\D}$ and $\th^*\in \R$ such that
$v^*=c_s(\z^*)+\rho_r(\z^*)e^{i\th^*}$.

 If $\z^*\in \D$, then the circle $ \p \D(c_s(\z^*),\rho_r(\z^*) )$ is contained in $\tilde{V}(r,s)$ and passes through $v^*\in \p \tilde{V}(r,s)$. This contradicts $\alpha<\pi$.

 Assume $\z^*\in \p \D$. Then the curve $\{c_s(\z):\z\in \p \D\}$ passes through $v^*=c_s(\z^*)$ and is contained in $\tilde{V}(r,s)$.
 If $c_s'(\z^*)\neq 0$, then we have a contradiction as before.
 Notice that $c_s'(\z^*)= 0$ if and only if $s=\frac{1}{2}$ and $\z^*=1$.
  In this case, since $r>s=\frac{1}{2}$, $v^*=c_{\frac{1}{2}}(1)=\frac{1}{2}\in \D(0,r)\subset \interior \tilde{V}(r,s)$, which is also a contradiction.
  \begin{figure}
  \centering
\begin{tikzpicture}
\draw (0,0) to [out=-45, in=300] (6,2);
\draw (0,0) to [out=135, in=150] (6,2);
\draw (3,{2+3*sqrt(3)/3})--(6,2)--(8,{2*sqrt(3)/3}) ;
\node[above] at (3,{2+3*sqrt(3)/3}) {$\partial H_2$};
\draw (4,{2+2*sqrt(3)})--(6,2)--(7.5,{2-1.5*sqrt(3)});
\node at (5,{2+1.5*sqrt(3)}) {$\partial H_1$} ;
\node at (2,1) {$\tilde{V}(r,s)$};
\draw[fill] (6,2) circle (1pt);
\node[right] at (6,2.2) {$v^*$};
\draw[thick,dashed] (6,2) -- (6,5);
\draw[->,domain=90:120] plot ({6+0.5*cos(\x)}, {2+0.5*sin(\x)});
\node at (5.8,2.7){$\theta_1$};
\draw[->,domain=90:150] plot ({6+1*cos(\x)}, {2+1*sin(\x)});
\node at (5.1,3) {$\theta_2$};
\draw[->,thick,domain=150:300] plot ({6+0.75*cos(\x)}, {2+0.75*sin(\x)});
\node at (5,1.5) {$\alpha$};
\end{tikzpicture}
\end{figure}

We have shown that the mapping
$(-\pi , \pi ] \ni \theta \mapsto v_\theta \in \partial \tilde{V}(r,s)$
is continuous and injective.
Since $\partial \tilde{V}(r,s)$ is a Jordan curve,
by making use of the intermediate value theorem, one can easily conclude
the mapping is also surjective.
Therefore the mapping gives a parametric representation
of $\partial \tilde{V}(r,s)$.
\end{proof}
\begin{rmk}
Notice that $\Gamma_s=\{c_s(\z):\z\in \p \D\}$ is convex if $0\le s\le 1/4$, is smooth and non-convex if $1/4<s<1/2$, has a cusp if $s=1/2$, and has a self-intersection point if $1/2<s<1$.
\end{rmk}

\section{Proof of Theorems \ref{thm:expression-of-boundary-curve} and \ref{thm:boundary-curve}}
 In this section we begin with the proof of  Theorem \ref{thm:boundary-curve}, which directly leads to  Theorem \ref{thm:expression-of-boundary-curve}.
\begin{proof}[Proof of Theorem \ref{thm:boundary-curve}]
Recall that $V(r,s ) =  A(r,s) \tilde{V}(r,s)$. Then by Proposition \ref{prop:representation-tilde-V} we conclude that the mapping
$$\gamma(\th)=A(r,s) \vth=A(r,s) (c_s( \zeta_\theta )  + \rho_r( \zeta_\theta ) e^{i \theta })$$
 gives the parametric representation of $\p V(r,s)$.

 From the argument of the proof of Theorem A, we can easily get the all extremal functions as required.
\end{proof}

\begin{proof}[Proof of Theorem \ref{thm:expression-of-boundary-curve}]
Noting
 $$\frac{1}{2(r+s)} = \frac{r-s}{2(r^2-s^2)}\le\frac{|r e^{i \theta}-s|}{2(r^2-s^2)}\le \frac{r+s}{2(r^2-s^2)}=\frac{1}{2(r-s)},$$
 we consider the following three cases.

(\rm{i})
If $s+r\le \frac{1}{2}$, then $\dfrac{|r e^{i \theta}-s|}{2(r^2-s^2)}\ge 1$ always hold for $\th \in(-\pi,\pi]$. Thus $\zth \in \p\D$, $\vth=c_s( \zeta_\theta )$ and
  $\p V(r,s)$ is given by
$$A(r,s)
  c_s( \zeta_\theta ),\quad \th \in \R.$$
As a function of $\th$, $\zth$ is continuous on $(-\pi,\pi]$ and injective since $\vth$ is injective. Thus, the mapping $\p \D\ni e^{i \th}\mapsto \zth \in \p \D$ is surjective and hence homeomorphic. Therefore, the map $\p \D\ni\z\mapsto A(r,s)
  c_s( \zeta)$
  is an another parametric representation of
$\p V(r,s)$.

(\rm{ii})
    If $r-s\ge \frac{1}{2}$, then $\dfrac{|r e^{i \theta}-s|}{2(r^2-s^2)}\le1$ always hold for $\th \in(-\pi,\pi]$. Thus  $\z_\theta = \dfrac{r e^{i \theta}-s}{2(r^2-s^2)}$, $\vth=c_s( \zeta_\theta )  + \rho_r( \zeta_\theta ) e^{i \theta }$. And
     $\p V(r,s)$ is given by
    $$A(r,s) \vth, \quad \th\in(-\pi,\pi].$$
Since
\begin{align*}
\vth&=\frac{r e^{i \theta}-s}{2(r^2-s^2)}\left(1-\frac{r e^{i \theta}-s}{2(r^2-s^2)} s\right)+r\left(1-\frac{r e^{i \theta}-s}{2(r^2-s^2)}\frac{r e^{-i \theta}-s}{2(r^2-s^2)}\right)\e\\
&=r\e+\frac{r e^{i \theta}-s}{2(r^2-s^2)}-\frac{(r e^{i \theta}-s)(s(r e^{i \theta}-s)+r(r-s\e))}{4(r^2-s^2)^2}\\
&=r\e+\frac{r e^{i \theta}-s}{2(r^2-s^2)}-\frac{(r e^{i \theta}-s)(r^2-s^2)}{4(r^2-s^2)^2}\\
&=\frac{\left\{ 1+4(r^2-s^2) \right\}r \e -s}{4(r^2-s^2)},
\end{align*}
we conclude that $\p V(r,s)$  coincides with  the circle given by \eqref{eq:ii-p-V}.

(\rm{iii})
If $r-s< \frac{1}{2}$ and $s+r> \frac{1}{2}$, then
$|r e^{i \theta}-s|=2(r^2-s^2)$ has the unique solution $\th_0=\cos^{-1} \frac{r^2+s^2-4(r^2-s^2)^2}{2sr}\in(0,\pi)$.
For $|\th|<\th_0$, we have $\dfrac{|r e^{i \theta}-s|}{2(r^2-s^2)} < 1$. Thus $\z_\theta = \dfrac{r e^{i \theta}-s}{2(r^2-s^2)}\in \D$ and $\vth=c_s( \zeta_\theta )  + \rho_r( \zeta_\theta ) e^{i \theta }$.
For $\th_0\le|\th|\le \pi$, we have $\dfrac{|r e^{i \theta}-s|}{2(r^2-s^2)} \geq 1$. Thus $\zth\in \p \D$ and $\vth=c_s( \zeta_\theta )$.
Therefore,  $\p V(r,s)$ consists of the following two curves:

(a) If $|\th|<|\th_0|$, then
\begin{align*}
\gamma(\th)&=\frac{2 \left( r^2-s^2 \right)}{r^2(1-r^2)^2}
   \left( c_s( \zeta_\theta )  + \rho_r(\zeta_\theta) e^{i \theta} \right)\\
   &=\frac{1}{2r^2(1-r^2)^2}
    \left[ \left\{ 1+4(r^2-s^2) \right\}r e^{i \theta } -s  \right],
\end{align*}
coincides with \eqref{eq:iii-1-p-V}.

(b)
If $|\th|\ge|\th_0|$, then
$\gamma(\th)=\dfrac{2 \left( r^2-s^2 \right)}{r^2(1-r^2)^2}
  c_s( \zeta_\theta )$,
  where $\zth$ is defined as in Theorem \ref{thm:expression-of-boundary-curve}.
Notice that the map $\zth$ is continuous and injective with respect to $\th\in (-\pi,\pi]$ and $\z_{\pi}=-1$. Therefore, the set $\{\zth:|\th_0|\le|\th|\le \pi\}$ coincides with the closed subarc $J$ of $\p \D$ which has
end points
$\zeta_0 = \dfrac{re^{i \theta_0}-s}{2(r^2-s^2)}$
and
$\ov{\zeta}_0 = \dfrac{re^{-i \theta_0}-s}{2(r^2-s^2)}$
and contains $-1$.
\end{proof}
\subsection*{Acknowledgements}
The authors would like to thank Professor Toshiyuki Sugawa for his proposal for this topic, his helpful comments and valuable suggestions. The authors are also grateful to Prof. Ikkei Hotta and Prof. Masahiro Yanagishita for their helpful suggestions.
\bibliographystyle{amsplain} 

\end{document}